\documentclass[12pt,leqno]{amsart}
\usepackage{amsmath,amssymb,amsfonts}
\usepackage{eucal}
\usepackage{enumerate}
\usepackage{graphicx}
\usepackage{tikz}
\pagestyle{plain}                       
\setlength{\textwidth}{6.5in}
\setlength{\oddsidemargin}{0.0in}
\setlength{\evensidemargin}{0.0in}
\setlength{\textheight}{9in}
\setlength{\topmargin}{-.5in}

\newcommand \comment[1]{}           
\renewcommand \comment[1]{\emph{[#1]}}      

\newtheorem{lemma}{Lemma}

\newtheorem{proposition}[lemma]{Proposition}
\newtheorem{theorem}[lemma]{Theorem}
\newtheorem{problem}[lemma]{Problem}

\theoremstyle{definition}
\newtheorem{remark}[lemma]{Remark}
\newtheorem{example}[lemma]{Example}
\newtheorem{definition}[lemma]{Definition}


\renewcommand{\phi}{\varphi}                 
\renewcommand{\epsilon}{\varepsilon}

\newcommand\bbN{\mathbb{N}}

\newcommand\bbR{\mathbb{R}}
\newcommand\bbZ{\mathbb{Z}}

\newcommand\cA{{\mathcal A}}
\newcommand\cN{{\mathcal N}}

\newcommand\G{\Gamma}

\begin{document}

\begin{center}

\Large
{Linial arrangements and local binary search trees}

\vskip10pt
\normalsize

\vskip20pt

{David Forge \footnote{The research of the   author is supported by the TEOMATRO project, grant number ANR-10-BLAN 0207.}\\
Laboratoire de recherche en informatique UMR 8623\\
B\^at.\ 650, Universit\'e Paris-Sud\\
91405 Orsay Cedex, France\\[5pt]

E-mail: {\tt forge@lri.fr}}\\[10pt]

\end{center}

\small
 {\sc Abstract.}
We study the set of NBC sets (no broken circuit sets) of the \emph {Linial} arrangement and deduce
a constructive bijection to the set of local binary search trees. We then generalize this construction to two families of Linial type arrangements for which the bijections are with some $k$-ary labelled trees that we introduce for this purpose.


\emph{Mathematics Subject Classifications (2010)}:
{\emph{Primary} 05C22; \emph{Secondary} 05A19, 05C05, 05C30, 52C35.}

\emph{Key words and phrases}:
{Integral gain graph, no broken circuits,
local binary search tree, Linial arrangement, gainic arrangements}
 \normalsize\\


\vskip 20pt

\section{Introduction}\label{intro}

An \emph{integral gain graph} is a graph whose edges are labelled
invertibly by integers; that is, reversing the direction of an edge
negates the label (the \emph{gain} of the edge).  The \emph{gainic
hyperplane arrangement}, $\cA[\Phi]$, that corresponds to an integral gain graph $\Phi$
is the set of all hyperplanes in $\bbR^n$ of the form $x_j-x_i=g$ for
edges $(i,j)$ with $i<j$ and gain $g$ in $\Phi$.  (See \cite[Section IV.4.1, pp.\ 270--271]{BG} or \cite{SOA}.) The use of graph representing "graphic" arrangements is common and we think that it should also be the case for gain graphs representing gainic arrangements. 

 In last ten years there has been much interest in real hyperplane
arrangements of this type, such as the Shi arrangement, the Linial
arrangement, and the composed-partition or Catalan arrangement. For
all these families,  the characteristic polynomials and the number of regions  have been found. For the Shi arrangement, Athanasiadis \cite{A} gave
a bijection between the regions and the parking functions.

In \cite{forge}, we  started to replace the study of the regions of such arrangements by  the study of the NBC sets of the corresponding gain graphs. This study works specially well in the case of the complete gain graphs with gains
in intervals $[a,b]$ with $a+b=0$ or 1. This permits us to give a  bijection between the  NBC sets of the braid arrangement and the increasing labelled trees and another one  between  the NBC sets of the Shi arrangements and the  labelled trees. For the other values such that $a+b=0$ or 1 we introduced $[a,b]$-trees to get similar bijections. 

In this paper, we do the same thing for the cases where $a+b=2$. The first case, where $a=b=1$, corresponds to the so called Linial arrangement. The construction for the Linial case goes to the \emph{local binary search trees} ($LBS$ for short) as proposed by Stanley et al.
For the  intervals of the form $[1,k+1]$ and $[-k+1,k+1]$ we introduce two $k$- generalizations of the LBS which are ($k$+1)-ary.


\section{Basic definitions}\label{defs}

An \emph{integral gain graph} $\Phi = (\G,\phi)$ consists of a graph
$\G=(V,E)$ and an orientable function $\phi: E \to \bbZ$, called the
\emph{gain mapping}.  Orientability means that, if $(i,j)$ denotes an edge
oriented in one direction and $(j,i)$ the same edge with the opposite
orientation, then $\phi(j,i) = -\phi(i,j)$.  We have no loops
but multiple edges are permitted. For the rest of the paper, 
we denote the vertex set by $V = \{1,2,\ldots,n\} =: [n]$ with $n\ge1$.
 We use the notations $(i,j)$ for an edge with endpoints $i$ and $j$, oriented from $i$ to $j$, and $g(i,j)$ for
such an edge with gain $g$; that is, $\phi(g(i,j))=g$.  (Thus $g(i,j)$ is the same edge as $(-g)(j,i)$. ) 
 A \emph{circle} is a connected 2-regular subgraph, or its edge set.  
 Writing a circle $C$ as a word $e_1e_2\cdots e_l$, the gain of $C$ is
$\phi(C):=\phi(e_1)+\phi(e_2)+\cdots+\phi(e_l)$; then it is well defined whether the gain is zero or non zero.
 A subgraph is called \emph{balanced} if every circle in it
has gain zero. We will consider most especially balanced circles.

Given a linear order $<_O$ on the set of edges $E$, a \emph{broken circuit} is the set
of edges obtained by deleting the smallest element in a balanced circle.  
A set of edges, $N\subseteq E$, is a \emph{no-broken-circuit set} (NBC set for short)
if it contains no broken circuit. This notion from matroid theory
(see \cite{Bjorner} for reference) is very important here. We denote by 
$\mathcal N$ the set of NBC sets of the gain graph. It is well known 
that this set depends on the choice of the order, but its cardinality does not.

We can now transpose some ideas or problems from hyperplane arrangements
to gain graphs. For any integers $a,b, n$, let $K_n^{ab}$ be the gain graph
built on vertices $V=[n]$ by putting on every edge $(i,j)$ all the gains
$k$, for $a\le k\le b$. These gainic arrangements are called sometimes  deformations of the braid 
arrangement or truncated arrangements.  We have four main examples coming from hyperplane arrangements.  
We denote by $B_n$ the gain graph $K_n^{00}$  and call it the \emph{braid gain graph}, by
$L_n$ the gain graph $K_n^{11}$  and call it the 
\emph{Linial gain graph},  by $S_n$ the gain graph $K_n^{01}$  and call it the \emph{Shi gain graph}
and finally by $C_n$ the gain graph $K_n^{-11}$  and call it the \emph{Catalan gain graph}.

\section{Height function of a balanced gain graph}

We introduce the notion of height function on an integral gain graph on the vertex set $[n]$.
A height function $h$ defines two important things for the rest of the paper: the induced 
gain graph $\Phi[h]$ of a gain graph $\Phi$ and an order $O_h$ on the set
of vertices extended lexicographically to the set of edges.

\begin{definition} For $V$ a finite set of  $\bbN$, a \emph{height function} on $V$   is a function $h$ from
$V$ to $\bbN$ such that $h^{-1}(0)\not=\emptyset$. 
The \emph{corner} of $V$ defined by the
height function is the smallest element of greatest height. 

\end{definition}


\begin{definition}
Let $\Phi$ be a connected and balanced integral gain graph on a set $V$ of integers. 
The height function of the gain graph is the unique function
$h_\Phi$ such that for every edge $g(i,j)$ we have $h_\Phi(j)-h_\Phi(i)=g$.
(Such a function exists iff $\Phi$ is balanced.)
\end{definition}

\begin{definition}
Let  $\Phi$ be a gain graph also on $V=[n]$ and let $h$ be a height function on  $V$. 
We say that an edge $g(i,j)$ is \emph{coherent with $h$} if $h(j)-h(i)=g$.
The subgraph $\Phi[h]$ of $\Phi$ \emph{selected by $h$}
is the gain graph on the same vertex set $V$ whose edges are the edges of $\Phi$ coherent with $h$.
\end{definition}

\begin{definition}
Given a height function $h$ on the set $V$, the order $O_h$ on the set $V=[n]$ is defined by $i<_{O_h}j$ iff $h(i)>h(j)$ or  ($h(i)=h(j)$ and $i<j$). 
The order $O_h$ is extended lexicographically to an order $O_h$ on the set of edges coherent with the height function.
\end{definition}

With this definition, the corner is the smallest vertex of the whole graph. The \emph{subcorner} of $V$ defined by the
height function is the smallest neighbour of the corner in $\Phi[h]$.  When $|V|=1$, the subcorner is by definition the corner. In the Linial case, where all gains are equal to 1, the subcorner  is the smallest vertex on the second level. When 0 is a gain value, the subcorner is the second vertex on the first level when it exists. 

To explain a little all these definitions, a height function is just putting
the vertices on different levels. An integral gain graph $\Phi$ on integers
 defines a height function $h_\Phi$ when it is balanced (a tree works).
On the other hand, given a height function $h$ and a gain graph $\Phi$,
we get a subgraph $\Phi[h]$ of $\Phi$ by keeping only the edges coherent with $h$.
Finally, the height function defines the order $O_h$ on the edges of the gain graph
which we will need to define the NBC sets.

The height function will play a very central role in all the paper. The constructions
for the Linial arrangement as well as for its generalisations will always work once the height function has been chosen. This property will have for consequence, once we also define a height function on the corresponding trees (the other side of the bijection), to have a finer correspondence such as fixing the root of the tree. 

%

\section{NBC sets and NBC trees in  gain graphs}\label{nbc-semi}

An NBC set in a gain graph $\Phi$ is basically an edge set, as it arises from matroid theory.  We usually assume an NBC set is a spanning subgraph, i.e., it contains all vertices.  Thus, an NBC tree is a spanning tree of $\Phi$.  Sometimes we wish to have non-spanning NBC sets, such as the components of an NBC forest; then we write of NBC \emph{subtrees}, which need not be spanning trees.

Given a height function $h$, a gain graph $\Phi$ and the linear order $<_{O_h}$ on the edges, they determine the set of NBC sets of the subgraph $\Phi[h]$ relative to the order $<_{O_h}$, 
denoted by ${\mathcal N}_O(\Phi[h])$. As always, this set depends on the choice of the order but  its cardinality does not.

\begin{lemma}
Given an NBC tree $A$ of height function $h$ with corner $c$, the forest $A\setminus c$ 
is a disjoint union of NBC subtrees 
of height functions $h_1$,...,$h_k$, and the orders $O_{h_i}$ are restrictions of the order $O_h$.\qed
\end{lemma}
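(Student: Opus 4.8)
The plan is to verify the three assertions of the statement in order: that $A\setminus c$ splits into subtrees, that each carries a height function restricting $O_h$, and that each is NBC. The first assertion is pure graph theory. Since $A$ is a spanning tree of $\Phi[h]$ and $c$ is a single vertex, deleting $c$ and its incident edges leaves a forest whose connected components $A_1,\dots,A_k$ are subtrees, with $k=\deg_A(c)$. Each $A_i$ is acyclic, hence contains no circle and is vacuously balanced, so by the definition of the height function of a balanced gain graph it does admit one; the real content is to identify that height function explicitly and to check its order.

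First I would record that every edge of $A$ lies in $\Phi[h]$ and is therefore coherent with $h$, and likewise for each edge of $A_i$. Putting $m_i:=\min_{v\in V(A_i)}h(v)$ and $h_i:=h|_{V(A_i)}-m_i$, the function $h_i$ maps $V(A_i)$ into $\bbN$, satisfies $h_i^{-1}(0)\ne\emptyset$, and preserves the defining equation $h_i(b)-h_i(a)=g$ for every edge $g(a,b)$ of $A_i$, since the constant $m_i$ cancels; thus $h_i$ is the height function of the balanced tree $A_i$. Because $h_i$ and $h$ differ on $V(A_i)$ only by the additive constant $m_i$, the height and index comparisons defining $O_{h_i}$ agree with those of $O_h$, so $O_{h_i}$ is exactly the restriction of $O_h$ to $V(A_i)$, and the same holds for its lexicographic extension to edges. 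The same cancellation shows that $\Phi[h_i]$, the subgraph selected by $h_i$ on $V(A_i)$, coincides with the subgraph of $\Phi[h]$ induced on $V(A_i)$.

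For the NBC property I would argue by contraposition. Suppose some $A_i$ contained a broken circuit of $\Phi[h_i]$, say $B=C\setminus\{e\}$ where $C$ is a balanced circle of $\Phi[h_i]$ and $e$ is its $O_{h_i}$-smallest edge. Every edge of $C$ has both endpoints in $V(A_i)$, so $C$ is also a balanced circle of $\Phi[h]$; and since $O_{h_i}$ is the restriction of $O_h$, the $O_{h_i}$-smallest edge $e$ of $C$ is also its $O_h$-smallest edge. Hence $B$ is a broken circuit of $\Phi[h]$ with $B\subseteq A_i\subseteq A$, contradicting the hypothesis that $A$ is NBC; so each $A_i$ is NBC. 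The only genuinely delicate point is this last step's requirement that the smallest edge of an internal circle be computed consistently in the two orders, which is precisely why the order-restriction property must be proved first and is built into the statement of the lemma; everything else is bookkeeping.
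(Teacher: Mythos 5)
Your proof is correct. Note that the paper offers no argument at all for this lemma (it is stated with a terminal \qed, treated as immediate), so there is no authorial proof to diverge from; your write-up simply supplies the routine verification the paper leaves implicit. The three ingredients you use are exactly the right ones: normalizing each restriction by $h_i := h|_{V(A_i)} - m_i$ so that $h_i^{-1}(0)\neq\emptyset$ as the definition of a height function requires, observing that the order $O_{h_i}$ is unchanged by an additive constant and hence restricts $O_h$ (on vertices and then lexicographically on edges), and lifting any broken circuit of a component $A_i$ to a broken circuit of $\Phi[h]$ contained in $A$ --- which is legitimate precisely because the order restricts, as you point out.
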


It is known from matroid theory that the NBC sets of the semimatroid of an affine arrangement $\cA$, 
with respect to a given ordering $<_O$ of the edges, correspond to the regions of the arrangement \cite[Section 9]{PS}.  
The semimatroid of $\cA[\Phi]$ is the frame (previously ``bias'') semimatroid of $\Phi$, which consists of the balanced edge sets of the gain graph 
$\Phi$ (\cite[Sect.\ II.2]{BG} or \cite{SOA}).  Thus, the NBC sets of that semimatroid are the spanning forests of $\Phi$.  Therefore $|\cN_O(\Phi)|$ equals the number of regions of $\cA[\Phi]$.

We show that the total number of NBC trees in an integral gain graph $\Phi$ equals the sum, over all height functions $h$, of the number of NBC trees in $\Phi[h]$.

Let $\Phi$ be connected.  Then we can decompose $\cN_O(\Phi)$ into disjoint subsets $\cN_O(\Phi[h])$, one for each height function $h$ that is coherent with $\Phi$ (that means that $\Phi[h]$ is also connected).  We have now: 
\[
\cN_O(\Phi) = \biguplus \{ \cN_O(\Phi[h]) \mid h \text{ is coherent with } \Phi \}.
\]

Therefore, 
the total number of NBC trees of all $\Phi[h]$ with respect to all possible height functions $h$ equals the number of NBC trees of $\Phi$.



\section{Complete $[a,b]$-gain graphs and their NBC trees}

Let $a$ and $b$ be two integers such that $a\le b$. 
The interval $[a,b]$ is the set $\{i\in \bbZ \mid a\le i\le b\}$.
We consider the gain graph $K^{ab}_n$ with vertices labelled by $[n]$
and with all the edges $g(i,j)$, with $i<j$ and $g\in [a,b]$. The arrangements that correspond to these
gain graphs, called deformations of the braid arrangement, have been of particular interest.  
The braid arrangement corresponds to the special case $a=b=0$. 
Other well studied cases are $a=-b$ (extended Catalan), $a=b=1$
(Linial) and $a=b-1=0$ (Shi).

We will describe the set of NBC trees of $K^{ab}_n[h]$ for a given
height function $h$.
The idea is that, as mentioned above, the height function $h$ defines
an order $O_h$ on a balanced subgraph. We will then be able to describe the NBC sets
coherent with $h$ for the order $O_h$.


\begin{proposition}
Let $a$ and $b$ be integers such that $a\le b$.  
Let $h$ be a height function of corner $c$ and let $\Phi$ be a spanning tree of 
$K^{ab}_n[h]$.  Suppose $c$ is incident to
the edges $g_i(c,v_i)$, $1\le i\le k$, and let $\Phi_i$ be the  connected
component of $\Phi\setminus c$ containing $c_i$ (that is a subtree). 
Then  $\Phi$   is an NBC tree if and only if
all the $\Phi_i $ are NBC trees and each $v_i$ is the $O_h$-smallest  vertex of $\Phi_i$ adjacent to $c$ in $K^{ab}_n[h]$.
\label{david}
\end{proposition}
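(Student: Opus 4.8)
The plan is to reduce the statement to the standard matroid description of NBC bases through fundamental circles, and then to read off the two conditions by classifying the non-tree edges according to their position relative to the corner $c$. The key preliminary observation is that in $K^{ab}_n[h]$ every edge $g(i,j)$ is coherent, so $g=h(j)-h(i)$, and therefore the gains telescope around any circle: \emph{every} circle of $K^{ab}_n[h]$ is balanced. Hence the broken circuits are exactly the sets $C\setminus e$ where $C$ is a circle and $e$ is its $O_h$-smallest edge. Since $\Phi$ is a spanning tree, a broken circuit $C\setminus e$ can lie inside $\Phi$ only if the deleted edge $e$ is the unique edge of $C$ outside $\Phi$; that is, $C$ is the fundamental circle $C_f$ of a non-tree edge $f=e$ and $f$ is the $O_h$-minimum of $C_f$. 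This gives the working criterion: $\Phi$ is an NBC tree if and only if no non-tree edge $f$ is the $O_h$-smallest edge of its fundamental circle $C_f$.

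Next I would use that $c$ is the $O_h$-smallest vertex, so under the lexicographic extension of $O_h$ to edges every edge incident to $c$ precedes every edge not incident to $c$ (the smaller endpoint is compared first, and $c$ beats all other vertices). With this in hand the non-tree edges $f$ fall into three disjoint classes. When both endpoints of $f$ lie in one component $\Phi_i$, the tree path closing $C_f$ stays inside $\Phi_i$, so by the Lemma of Section~\ref{nbc-semi} (the components of $\Phi\setminus c$ are subtrees whose orders $O_{h_i}$ are restrictions of $O_h$) the condition that $f$ not be minimal in $C_f$ is precisely the NBC condition internal to $\Phi_i$. When the endpoints lie in two different components, the tree path between them runs through $c$, so $C_f$ contains two $c$-incident edges, each $O_h$-smaller than $f$; thus $f$ is never the minimum and no condition arises. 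When $f=(c,w)$ is incident to $c$ with $w\in\Phi_i$, the circle $C_f$ contains exactly one other $c$-edge, namely $(c,v_i)$, while every remaining edge lies in $\Phi_i$ and is therefore $O_h$-larger; hence the minimum of $C_f$ is $\min\{(c,v_i),(c,w)\}$, and $f$ fails to be minimal exactly when $v_i<_{O_h}w$.

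Finally I would assemble the three cases. The first class, ranged over all internal non-tree edges, contributes exactly the requirement that each $\Phi_i$ be an NBC tree; the second class is vacuous; and the third class, ranged over all neighbours $w$ of $c$ lying in $\Phi_i$, contributes exactly the requirement that $v_i$ be the $O_h$-smallest vertex of $\Phi_i$ adjacent to $c$ in $K^{ab}_n[h]$. Since these three families are disjoint and exhaust all non-tree edges, the criterion of the first paragraph holds for every $f$ if and only if both stated conditions hold, which yields the equivalence in both directions. I expect the only delicate point to be the bookkeeping in the third case: one must verify that the fundamental circle of a $c$-incident non-tree edge meets $c$ in exactly one further tree edge and that all its non-$c$ edges are genuinely $O_h$-larger, so that the minimum is decided by comparing $w$ with $v_i$ alone. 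Everything else rests on the balancedness of all circles in $K^{ab}_n[h]$ and on the fact that $c$-incident edges precede all others in the order $O_h$.
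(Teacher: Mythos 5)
Your proof is correct and takes essentially the same route as the paper: both arguments classify a would-be $O_h$-minimal non-tree edge according to whether its endpoints lie in a single component $\Phi_i$, in two different components, or include the corner $c$, and both rest on the key fact that edges incident to $c$ precede all other edges in $O_h$. Your write-up is a cleaner packaging of the same idea — you make explicit the reduction to fundamental circles and the observation that every circle of $K^{ab}_n[h]$ is balanced, points the paper leaves implicit — but the underlying case analysis is identical.
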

\begin{proof}
Everything comes from the choice of the order $O_h$ for the
vertices and the edges. If we have a vertex $v$ in $\Phi_i$
such that $v<_{O_h}v_i$ for which the edge $(c,v)\in K^{ab}_n[h] $ exists then this edge 
is smaller than all the edges of $\Phi _i +c$. Such an edge then closes a balanced
circuit being the smallest edge of the circuit which is not possible.

In the other direction, if $\Phi$ is not an NBC tree then there is an 
edge $(x,y)$ in $ K^{ab}_n[h] $ closing a balanced circuit by being the
smallest edge of the circuit. Since the $\Phi_i$ are by hypothesis are NBC trees
the vertices $x$ and $y$ can not be in a same $\Phi_i$. They can not be
in two different $\Phi_i$ neither since the smallest edge would contain
$c$ necessarily. The last solution is that one of the vertices say $x$ is $c$
and that the other vertex $y$ is in a $\Phi_i$. Since the edge $(c,v_i)$
will be in the circuit we need to have  $(x,y)<_{O_h}(c,v_i)$.
This implies the condition of the proposition.
\end{proof} 

\section{Local binary search trees and two generalisations }

The  local binary search trees (LBS for short) are labelled rooted plane binary trees such that
a vertex has possibly two children a left one and a right one with
the property that : the value of the parent is bigger than the value of the left child 
and smaller than the value of the right child. The number of LBS labelled on the set $[n]$
is known to be equal to the number of regions of the Linial arrangement in dimension $n$.

\begin{definition}
A LBS is called {\it a left LBS } (LLBS for short) (resp. {\it a right LBS (RLBS for short)}) if the root has no right child (resp. left child).
\end{definition}

Let $T$ be a LBS tree of root $r=r_0$. Let $r_1$ be its  right child and $r_2$
be $r_1$'s right child and so on... $r_{k+1}$ be $r_k$'s right child. Let $r_\ell$ the last of these
vertices. If we delete the edges $\{r_i,r_{i+1}\}$ we obtain $\ell +1$ disjoint
 LLBS  $L_0$ , \ldots ,$L_\ell$. We call this the {\it left decomposition} of an LBS.

Similarly we have the right decomposition of an LBS by taking $r=r'_0$. Let $r'_1$ be its  left child and $r'_2$
be $r'_1$'s left child and so on... $r'_{k+1}$ be $r'_k$'s left child. Let $r'{_\ell '}$ the last of these
vertices. If we delete the edges $\{r'_i,r'_{i+1}\}$ we obtain $\ell ' +1$ disjoint
 RLBS $R_0$ , \ldots ,$R_{\ell'}$. We call this the {\it right decomposition} of an LBS..

We introduce now a generalization of LBS which will have many similar properties.
The  local $k$-ary search trees (L$k$S for short) are labelled rooted plane $k$-ary trees such that
a vertex has possibly $k$ children numbered from 1 to $k$ such  that  the value of the parent is bigger 
than the value of the number 1  child 
and smaller than the value of the number $k$ child. Note that a vertex has any number of children
from 0 to $k$ but that the number of the child does not depend on the presence of
the other children (this is what means plane $k$-ary). Note also that of course a LBS is a L$k$S for
$k=2$.

\begin{definition}
A L$k$S is called {\it a left L$k$S } (LL$k$S for short) 
 if the root has no number $k$ child.
\end{definition}

%

Let $T$ be a L$k$S tree of root $r=r_0$. Let $r_1$ be its  number $k$ child and $r_2$
be $r_1$'s number $k$ child and so on... $r_{k+1}$ be $r_k$'s number $k$ child. Let $r_\ell$ the last of these
vertices. If we delete the edges $\{r_i,r_{i+1}\}$ we obtain $\ell +1$ disjoint
 LL$k$S  $L_0$ , \ldots ,$L_\ell$. We call this the {\it left decomposition} of an L$k$S.
If we start with a LL$k$S tree $T$ (already left) of root and subtrees $T^i$ for $1\le i\le k-1$, then each $T^i$ has a left decomposition $D^i=\{L_0^i, L_1^i,\ldots,L^i_{\ell_i}\}$. We call the set of the $D^i$ the left decomposition of $T$.

%

We can define a height function of an LL$k$S of root $r$ and left decompositions
$D_i=\{L_0^i, L_1^i,\ldots,L^i_{\ell_i}\}$ recursively by: define $h(r)=0$ and take a height function
$h_j^i$ in each $L_j^i$. Then define $h$ on all the roots of the $L_j^i$ by:
\begin{itemize}
\item if $r_j^i$ is the root of a $L_j^i$ and is smaller than $r$
take $h(r_j^i)=-(k-i+1)$; 
\item if $r_j^i$ is the root of a $L_j^i$ is bigger than $r$
take $h(r_j^i)=-(k-i+1)+1$. 
\end{itemize}
We just need to take for the other vertices $h(x)=h_j^i(x) +h(r_j^i)-h_j^i(r_j^i)$ if  $x\in L_j^i$.

Note that $r$ is the corner of this height function. Note also that the vertices of weight 1 are in the lowest levels. This choice comes from the future bijection with the NBC sets. It could be easily reversed. Note also that the level of $r_j^i$ the root of $L_j^i$ has two the possible values $-k+i-1$ or 
$-k+i$ depending on the fact $r_j^i<r$ or not.

We introduce now another generalization of LBS corresponding to another generalization of the Linial arrangement.
The semi local $k$-ary search trees (SL$k$S for short) are labelled rooted plane $k$-ary trees such that
a vertex has possibly $k$ children numbered from 1 to $k$ such  that  the value of the parent is bigger 
than the value of the first  child from 1 to $k-1$
and smaller than the value of the number $k$ child.  Note that of course a LBS is a SL$k$S for $k=2$. Note also that a  SL$k$S tree is also a  L$k$S tree and that therefore we will not need to define the decomposition or  the height function of this new family.

\section{Linial gain graph}

In \cite{forge}, we have studied with more details the cases $a+b=0$ and $a+b=1$ which
contain the braid and the Shi cases. Now we will consider the case $a+b=2$ and starting by  $a=b=1$ corresponding to the Linial case.

The Linial gain graph is the first $K^{ab}_n$ graph with $a+b=2$ by taking $a=b=1$.  It corresponds to the
Linial arrangement, whose hyperplanes have equation $x_i-x_j=1$, with $i<j$. 
The number of regions (and then of NBC sets) of the Linial arrangement
in dimension $n$ is known to be equal to 
$$\frac{1}{n2^{n-1}}\sum_{k=1}^n{{n}\choose{k}} k^{n-1}.$$ It is also
known to be equal to the number of local binary search trees on $n$ vertices. 
We will give now a bijection between   the NBC sets of the Linial arrangement
on $[n]$ and the LBS on $[n]$. We start by giving a bijection between   
the NBC trees of the Linial arrangement on $[n]$ and the LLBS on $[n]$.

\begin{theorem}
The number of NBC of the Linial gain graph on $[n]$ is equal to the number of LBS on $[n]$.
Moreover, the number of NBC of corner $c$ of the Linial gain graph  on $[n]$ is equal to 
the number of LBS of root $c$ on $[n]$. And even for any height function on $[n]$, the number of NBC of height $h$ of the Linial gain graph  on $[n]$ is equal to the number of LBS of height $h$ on $[n]$.
\end{theorem}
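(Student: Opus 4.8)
The plan is to prove the strongest of the three assertions, the one refined by the height function, and to deduce the other two from it by summation. Indeed, grouping the height functions $h$ according to their corner and summing the height-$h$ identity over all $h$ with a fixed corner $c$ yields the statement for corner $c$, and summing over all $h$ yields the total count. What makes these summations legitimate is precisely the decomposition
\[
\cN_O(L_n)=\biguplus\{\cN_O(L_n[h])\mid h\text{ coherent with }L_n\}
\]
established above, together with the parallel partition of the LBS on $[n]$ according to the height function attached to an LBS in this section. So it suffices to fix $h$ and to biject the NBC sets of $L_n[h]$ with the LBS of height $h$.

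I would build this bijection in the two stages announced before the theorem, the connected case being the engine. First, by induction on $n$, biject the NBC trees of $L_n[h]$ with the LLBS of height $h$: given an NBC tree $A$ with corner $c$, the Lemma splits $A\setminus c$ into NBC subtrees whose height functions are the restrictions of $h$ and whose orders are restrictions of $O_h$, while Proposition \ref{david} shows that the reattachment of $c$ is \emph{forced}, each edge running to the $O_h$-smallest vertex of its subtree adjacent to $c$ in $L_n[h]$; on the tree side one peels the root, passes to the left decomposition of its left subtree into LLBS pieces, and matches subtrees to pieces by the inductive hypothesis. The passage to arbitrary NBC sets is then formal: an edge set is NBC if and only if each of its connected components is NBC, since a broken circuit is a path and hence lies inside a single component. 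Thus NBC sets of $L_n[h]$ correspond to partitions of $[n]$ into NBC subtrees carrying the induced heights, while, dually, an LBS is reconstructed from its left decomposition by gluing its LLBS pieces along their increasing sequence of roots, so LBS of height $h$ correspond to partitions of $[n]$ into LLBS carrying the induced heights. Applying the connected bijection block by block, and checking that the $O_h$-minimal vertex of the NBC set is carried to the root of the LBS, then gives the height-$h$ identity and, in particular, matches corners with roots.

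The main obstacle is the height bookkeeping inside the inductive step. Proposition \ref{david} governs the reattachment of the corner purely through the order $O_h$, whereas the tree side is governed by the height function placed on the LLBS by hand in this section; the crux is to verify that these two prescriptions are compatible, namely that this height function was \emph{defined precisely so that} the $O_h$-minimality condition of Proposition \ref{david} translates, level by level, into the recursive reconstruction of a root's left subtree from its left decomposition, and in particular so that the root remains the corner of the resulting height function even when several vertices share the top level. Everything else—the reduction to fixed $h$, the two decompositions into components and pieces, and the final componentwise assembly—is routine once this single compatibility is established. A secondary point to keep honest is that the partition of $\cN_O(L_n)$ by height must assign each NBC set to exactly one $h$, so that the two summations genuinely reassemble the totals without overcounting.
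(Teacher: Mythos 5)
Your outer reductions are fine and match the paper's framework: fixing a height function $h$ via the partition $\cN_O(L_n)=\biguplus_h \cN_O(L_n[h])$, and passing between NBC forests and LBS by matching connected components with the pieces of the left decomposition. The genuine gap is in the inductive step for the connected case, which is exactly where the whole difficulty of the Linial case lies. You assume that the components of $A\setminus c$ correspond one-to-one to the pieces of the left decomposition of the LLBS, with the reattachment to $c$ ``forced'' by Proposition \ref{david}. This is false. In $L_n[h]$ the corner $c$ is adjacent only to vertices lying exactly one level below $c$ and carrying a smaller label. A piece of the left decomposition whose root is larger than $r$ may contain \emph{no} such vertex, so it cannot be attached to $c$ at all, forced or otherwise; and dually, a single component of $A\setminus c$ may contain several pieces of the left decomposition. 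The paper's own Figure~1 already exhibits this: the LLBS with root $4$ has four pieces $\{1\},\{3\},\{2,5\},\{6,7\}$, while the corresponding NBC tree has only three components after deleting the corner $4$, namely $\{1\},\{2,5\},\{3,6,7\}$; the piece $\{6,7\}$ (with $7$ at the level of $4$ and $6>4$ one level below) cannot be joined to $4$ by any edge of $L_7[h]$. This failure is precisely why the key lemma of the $a+b\le 1$ cases --- that attachment vertices are corners of their components --- breaks down when $a+b=2$, as the paper notes.

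What the paper's proof actually does, and what is missing from your proposal, is the mechanism of the special vertex $p$: the largest left-decomposition root smaller than $r$ (which becomes the subcorner of the NBC tree). Pieces with root $\le p$ attach to $r$ by their root; pieces with root $>r$ attach to $r$ by their \emph{subcorner} when that subcorner is small enough, and otherwise are hung on $p$ by their corner (via a gain-$1$ edge), so that one component of the NBC tree absorbs several pieces. Conversely, to invert, one must recognize and cut the chain of edges above $p$ inside its component to recover the individual pieces --- this recognition step has no counterpart in your proof, and without it the map in the NBC-to-LLBS direction is not even well defined. So the ``single compatibility'' you defer to at the end is not a routine verification: it is the theorem, and the naive component-to-piece matching that you propose provably cannot realize it.
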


\begin{example}
In the figure, we give on the left a LLBS and on the right the NBC tree corresponding. The special point in both sides is 3; in the left side if it the biggest number smaller than the root 4 in the right chain $1-3-5-7$ and in the right side it is the biggest neighbour of the corner. 

From left to right, the node which are smaller than the root becomes neighbours of the corner. The two other node 5 and 7 will be corners of their subtree but are connected differently. The subtree of root 5 gives a sub NBC tree of corner 5 (always the root becomes the corner) and of sub corner 2. Since 2 is smaller than the root 4, this subtree is connected by the edge $\{4,2\}$. The subtree of root 7 gives a sub NBC tree of corner 7  and of sub corner 6. Since 6 is bigger than the root 4, this subtree is connected by the edge $\{3,7\}$. 

In the other direction, 3 is again recognized as the special vertex and gives the decomposition in subtrees. The LLBS obtained are just making the chain of the left decomposition of the LLBS of root 4. 

\end{example}

\begin{tikzpicture}
\node [draw] (A) at (2,0) {4} ;
\node [draw] (B) at (0,-1) {1} ;
\node [draw] (C) at (2,-2) {3} ;
\node [draw] (D) at (4,-3) {5} ;
\node [draw] (E) at (2,-4) {2} ;
\node [draw] (F) at (6,-4) {7} ;
\node [draw] (G) at (4,-5) {6} ;
\draw (A) -- (B);
\draw (C) -- (B);
\draw (C) -- (D);
\draw (D) -- (E);
\draw (D) -- (F);
\draw (F) -- (G);

\node [draw] (A') at (10,0) {4} ;
\node [draw] (B') at (10,-2) {1} ;
\node [draw] (C') at (12,-2) {3} ;
\node [draw] (D') at (14,-0) {5} ;
\node [draw] (E') at (14,-2) {2} ;
\node [draw] (F') at (16,0) {7} ;
\node [draw] (G') at (16,-2) {6} ;
\draw (A') -- (B');
\draw (C') -- (A');
\draw (C') -- (F');
\draw (A') -- (E');
\draw (D') -- (E');
\draw (F') -- (G');

\node at (8,-7) {Figure 1 : Correspondance from LLBS to NBC};
\end{tikzpicture}

\begin{proof}
From the definitions of height functions, the following correspondence preserves height function in both directions and therefore corner goes to root. The fact that the height function is preserved is of course very important but it also forces the position of the vertices.

To go from LLBS to LBS we just use the left decomposition which correspond to a partition of $[n]$
in the same way as a NBC is just a union of NBC trees (we could call this a decomposition of an NBC in NBC trees!).

\medskip
{\bf(From LLBS to NBC trees)} Let $L$ be a LLBS with root $r$ and left decomposition 
in LLBS $L_i$ of root $r_i$ for $1\le i\le k$. We have $r_1<r_2<\cdots<r_k$ and let $r_p$
be the biggest $r_i$ smaller than $r$. The bijection is recursive as follows:
\begin{itemize}
\item Let $N_i$ be the NBC trees corresponding to $L_i$ for $1\le i \le k$. They are of corner $r_i$. 
\item Add the edge $ \{r, r_i\}$ for $1\le i\le p$.
\item For $p+1\le i\le k$, let $c_i$ and $sc_i$ be the corner and subcorner of $N_i$. By
hypothesis $c_i$ is bigger than $r$ and also $r_p$. If $sc_i<r_p$  then add the edge $ \{r, sc_i\}$
and otherwise add the edge $ \{r_p, c_i\}$. 
\end{itemize}

\medskip
{\bf(From NBC trees to LLBS)} Let $T$ be a NBC tree of height function $h_T$ and so for the order 
$O_{h_T}$. Let $c$ be its corner and $v_j^i$ its neighbours and $T_j^i$ the subtrees
obtained by deleting the vertex $c$ ( by notation the vertex $v_j^i$ uses gain $i$)  . Note that the vertex $v_j^i$ is not necessarily the corner of 
the subtree $T_j^i$ but that it can also be the subcorner. The vertex $p$ which is the smallest for $O_h$ neighbour of the corner plays a special role. The vertex $p$ is the biggest vertex using gain $k$.  We need to recognize the pieces which were attached to $N_p$ in the preceding
 construction. The vertex $p$ by the order and the fact that $T$ is an NBC tree has at most one neighbour  $v'_1$ bigger and $(p,v'_1)$ has gain 1. The vertex $v'_1$ is not necessarily the corner of $T\setminus N_p$ but can have a neighbour $v'_2$ with edge of gain 0 and $v'_2<v'_1$.  Let $T'_0,T'_1,\ldots,T'_\ell$ the subtrees of $T_p$
obtained by deleting the edges $\{v'_i,v'_{i+1}\}$. The subtree $T'_0$ is the subtree
containing $p$ and the subtree $T'_i$ is the subtree containing $v'_i$.  The bijection is reccursive:
\begin{itemize}
\item Let $L_i$ be the LLBS corresponding to $T_i$ for $1\le i \le k-1$ of corner $c_i$
\item Let $L'_j$ be the LLBS corresponding to $T'_i$ for $0\le i \le \ell$ of corner $c_i$
\item Let relabel the $L_i$ and $L'_i$ to some $L''_i$ along their roots $r_i$. That is 
we have $k+\ell$ LLBS $L''_i$ such that $r_1<r_2<\cdots<r_{k+\ell}$. 
\item Add the edge $ \{c,r_1\}$ (a left edge) and all the right edges $\{r_i,r_{i+1}\}$ for
$1\le i\le k+\ell-1$.
\end{itemize}

\end{proof}

We give now a more surprising correspondence between RLBS of root $r$ and NBC trees of subcorner $r$. Clearly we can go by symmetry from the RLBS to LLBS so the surprise is that that there is also a correspondence between LLBS of corner $c$ and of subcorner $n-c+1$.

\begin{theorem}
The number of NBC of the Linial gain graph on $[n]$ is equal to the number of LBS on $[n]$.
Moreover, the number of NBC of subcorner $c$ of the Linial gain graph  on $[n]$ is equal to 
the number of RBS of root $c$ on $[n]$. 
\end{theorem}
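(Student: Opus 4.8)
The plan is to deduce this statement from the previous theorem by means of two symmetries, isolating the one genuinely new ingredient that the word ``surprising'' is pointing at.

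First I would set up the left--right reflection on binary trees. Define $\rho$ to be the involution on LBS that, at every node, exchanges the left child with the right child and relabels each vertex $v$ by $n+1-v$. Since reversing the order of the labels turns the inequality ``parent $>$ left child'' into ``parent $<$ right child'' and conversely, $\rho$ sends an LBS to an LBS; it exchanges LLBS and RLBS, and it carries a tree of root $c$ to a tree of root $n+1-c$. In particular $\rho$ is a bijection between RLBS of root $c$ and LLBS of root $n+1-c$, so by the previous theorem their number equals the number of NBC trees of corner $n+1-c$.

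Next I would produce the matching symmetry on the gain-graph side. The map $\sigma$ sending each vertex $v$ to $n+1-v$ and each height value $h$ to $-h$ (normalised so that the minimum height is $0$) preserves the Linial gain graph: an edge between $i<j$ coherent with $h$, i.e.\ with $h(j)-h(i)=1$, becomes an edge between $n+1-j<n+1-i$ whose higher-indexed endpoint is again one level up, hence still of gain $1$. Thus $\sigma$ carries $L_n[h]$ to $L_n[\sigma(h)]$, sends NBC trees to NBC trees, and---because it exchanges ``$h(i)>h(j)$'' with ``$h(i)<h(j)$'' and reverses ties---reverses the order $O_h$. Consequently $\sigma$ sends the corner (the $O_h$-least vertex) of a tree to the $O_h$-greatest vertex of its image, namely the largest-indexed vertex on the bottom level. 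Calling this vertex the \emph{anticorner}, $\sigma$ yields $\#\{\text{NBC trees of corner }\gamma\}=\#\{\text{NBC trees of anticorner }n+1-\gamma\}$; in fact composing $\rho$, the previous theorem, and $\sigma$ gives an explicit bijection between RLBS of root $c$ and NBC trees of anticorner $c$.

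The remaining step, which I expect to be the main obstacle, is exactly the advertised surprise: the theorem is phrased through the subcorner (the smallest neighbour of the corner, i.e.\ the smallest vertex on the level just below the top), whereas the symmetry chain above naturally produces the anticorner (the largest vertex on the bottom level). These two vertices differ tree by tree---already on $[3]$ the star with centre $3$ has subcorner $1$ but anticorner $2$---so what must be proved is that the subcorner and anticorner statistics are equidistributed over the Linial NBC trees, that is, $\#\{\text{subcorner }c\}=\#\{\text{anticorner }c\}$ for every $c$. I would attack this through the recursive corner-deletion structure: by the Lemma of Section~\ref{nbc-semi} deleting the corner of an NBC tree splits it into NBC subtrees whose orders are restrictions of $O_h$, and by Proposition~\ref{david} the reattachment of the corner is rigid, so one can hope to match the subcorner of a tree with the anticorner of a partner tree recursively on the components. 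Establishing this equidistribution (equivalently, a bijection realising it) is the crux; once it is in hand, chaining it with the displayed identities above gives $\#\{\text{RLBS of root }c\}=\#\{\text{NBC of subcorner }c\}$, and summing over $c$, together with the decomposition $\cN_O(\Phi)=\biguplus_h \cN_O(\Phi[h])$, recovers the global equality with the number of LBS.
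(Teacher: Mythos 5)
Your reduction leaves the actual content of the theorem unproven. After your two symmetries you arrive at the statement that RLBS of root $c$ are equinumerous with NBC trees whose \emph{anticorner} is $c$, and you then note that what remains is the equidistribution of the subcorner and anticorner statistics --- a step you explicitly leave open as ``the crux''. But that equidistribution essentially \emph{is} the theorem, modulo the routine mirror symmetry; your own example on $[3]$ (subcorner $1$, anticorner $2$) shows the statistics disagree tree by tree, so a genuinely new bijection or induction is needed there, and the appeal to corner-deletion and Proposition \ref{david} is only a hope, not an argument. The paper does not argue by symmetry at all: it proves the statement directly by deleting the \emph{subcorner} $sc$ of the NBC tree. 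Each neighbour $v_i$ of $sc$ is either the corner $c_i$ or the subcorner $sc_i$ of its component $N_i$; each $N_i$ is recursively converted to an RLBS rooted at $sc_i$; these are sorted so that $sc_1>sc_2>\cdots>sc_k$ and chained to $sc$ by one right edge $\{sc,sc_1\}$ followed by left edges $\{sc_i,sc_{i+1}\}$. The inverse uses the right decomposition of the RLBS and reattaches each piece by a case analysis comparing $sc$ with $c_i$ and $sc_i$ (edge $\{sc,c_i\}$ when $sc_i>sc$ or $sc>c_i$, edge $\{sc,sc_i\}$ when $c_i>sc>sc_i$). The ``surprise'' is thus resolved head-on by a new decomposition, not reduced away.

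There is also a flaw in the middle of your chain: $\sigma$ does not send NBC trees to NBC trees. Since $\sigma$ reverses the order $O_h$, the image of a tree containing no broken circuit (circuit minus its $O_h$-smallest edge) is a tree containing no circuit minus its \emph{largest} edge, i.e.\ an NBC tree for the reversed order --- a different family. The order-independence of the NBC count gives equality of the \emph{total} cardinalities of the two families, but not equality refined by the anticorner (or any other) statistic, which is exactly what your displayed identity
$\#\{\text{NBC trees of corner }\gamma\}=\#\{\text{NBC trees of anticorner }n+1-\gamma\}$
requires. So even the intermediate claim that RLBS of root $c$ correspond to NBC trees (in the paper's sense, for the order $O_h$) of anticorner $c$ is not established, and this gap compounds the first one.
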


\begin{proof}

\medskip
{\bf(From NBC trees to RLBS)} Let $N$ be a NBC tree of height function $h_N$ and so for the order 
$O_{h_N}$. Let $sc$ be its subcorner and $v_1<v_2<\ldots<v_k$ its neighbours and $N_1,\ldots ,N_k$ the subtrees
obtained by deleting the vertex $sc$. Note that the vertex $v_i$ is whether  the corner $c_i$ 
or the subcorner $sc_i$ of the subtree $N_i$. From the fact that $sc$ is the subcorner
of $N$, we deduce that there is at least one of the $N_i$ of subcorner $sc_i$ bigger than $sc$.
 The bijection is recursive:
\begin{itemize}
\item Let $R_i$ be the RLBS corresponding to $N_i$ for $1\le i \le k$ of root $sc_i$.
\item Let relabel the $R_i$ in such a way  that $sc_1>sc_2>\cdots>sc_k$. 
\item Add the edge $ \{sc,sc_1\}$ (a right edge) and all the left edges $\{sc_i,sc_{i+1}\}$ for
$1\le i\le k-1$.
\end{itemize}

\medskip
{\bf(From RLBS to NBC trees)} Let $R$ be a RLBS with root $r$ and right decomposition 
in RLBS $R_i$ of root $r_i$ for $1\le i\le k$. We have $r_1>r_2>\cdots>r_k$ and let $r_p$
be the biggest $r_i$ smaller than $r$. The bijection is recursive as follows:
\begin{itemize}
\item Let $N_i$ be the NBC tree corresponding to $R_i$ for $1\le i \le k$. They are of subcorner $sc_i$
and of corner $c_i$ with $c_i\ge sc_i$ (recall that the subcorner in a one vertex tree is the corner). 
\item If $sc_i>sc$ then add the edge $ \{sc, c_i\}$ for $1\le i\le p$.
\item If $sc>c_i$ then add the edge $ \{sc, c_i\}$ for $1\le i\le p$.
\item If $c_i>sc>sc_i$ then add the edge $ \{sc, sc_i\}$ for $1\le i\le p$.
\end{itemize}

\end{proof}

Here is now a third decomposition of NBC trees by its corner but the pieces are 
given by their subcorner. 
There is a fourth decomposition of a RLBS tree into LLBS which is missing.

\medskip
{\bf(From NBC trees to LLBS)} Let $N$ be a NBC tree of height function $h_N$ and so for the order 
$O_{h_N}$. Let $c$ be its corner and $v_1<v_2<\ldots<v_k$ its neighbours and $N_1,\ldots ,N_k$ the subtrees
obtained by deleting the vertex $c$. Note that the vertex $v_i$ is whether  the corner $c_i$ 
or the subcorner $sc_i$ of the subtree $N_i$. From the fact that $c$ is the corner
of $N$, we deduce that all the $N_i$  have a subcorner $sc_i$ smaller than $c$.
 The bijection is recursive, we build $R$ of root $c$ by:
\begin{itemize}
\item Let $R_i$ be the RLBS corresponding to $N_i$ for $1\le i \le k$ of root $sc_i$.
\item Let relabel the $R_i$ in such a way  that $sc_1>sc_2>\cdots>sc_k$. 
\item Add the edge $ \{c,sc_1\}$ (a left edge) and all the left edges $\{sc_i,sc_{i+1}\}$ for
$1\le i\le k-1$.
\end{itemize}

\medskip
{\bf(From LLBS to NBC trees)} Let $L$ be a LLBS with root $r$ and right decomposition 
in RLBS $R_i$ of root $r_i$ for $1\le i\le k$. We have $r_1>r_2>\cdots>r_k$. 
The bijection is recursive, we build $N$ of corner $r$ by::
\begin{itemize}
\item Let $N_i$ be the NBC tree corresponding to $R_i$ for $1\le i \le k$. They are of subcorner $r_i$
and of corner $c_i$ with $c_i\ge r_i$ (recall that the subcorner in a one vertex tree is the corner). 
We have for all $N_i$ that the subcorner is smaller then $r$ the corner of the full NBC tree.
\item If $c>c_i$ then add the edge $ \{c, c_i\}$ for $1\le i\le k$.
\item If $c<c_i$ then add the edge $ \{c, sc_i\}$ for $1\le i\le k$.
\end{itemize}


\section{Other $[a,b]$  gain graph with $a+b=2$}

We are now considering the complete gain graph $K_n^{2-k,k} $ for any integer $k\ge 1$. 
The case $k=1$ is the preceding Linial case.

\begin{theorem}
The number of NBC of the $K_n^{2-k,k} $ gain graph on $[n]$ is equal to the number of L$k$S on $[n]$.
Moreover, the number of NBC of corner $c$ of the $K_n^{2-k,k} $ gain graph  on $[n]$ is equal to 
the number of L$k$S of root $c$ on $[n]$. And even for any height function $h$ on $[n]$, the number of NBC of height $h$ of the $K_n^{2-k,k} $ gain graph  on $[n]$ is equal to the number of L$k$S of height $h$ on $[n]$.
\end{theorem}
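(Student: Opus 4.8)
The plan is to mirror the proof of the Linial case given above, replacing the binary left/right dichotomy by the $k$-ary child structure and using the gain of an edge incident to the corner to read off which child-slot a subtree occupies. As in that proof, I would first build a height-preserving bijection between the NBC trees of $K_n^{2-k,k}[h]$ with corner $c$ and the LL$k$S with root $c$ carrying the height function $h$, and then pass from trees to the full statement by the left decomposition, exactly as the decomposition of an L$k$S into LL$k$S pieces along the number-$k$-child chain matches the splitting of an NBC set into its NBC trees. The three assertions are nested: the height-refined statement implies the corner-refined one by summing over all height functions with corner $c$, and that in turn implies the global count via the disjoint decomposition $\cN_O(\Phi)=\biguplus_h \cN_O(\Phi[h])$ of Section \ref{nbc-semi}. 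So it suffices to establish the height-refined bijection for each fixed $h$.

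The combinatorial heart is a dictionary between the gain values $[2-k,k]$ and the child-slots of a L$k$S, read off from the height function defined in the previous section. With the corner at level $0$, a subtree attached by an edge of gain $g$ sits at level $-g$ when its attaching vertex carries the smaller label (forcing $g\in\{2,\dots,k\}$ and identifying child number $i=k+1-g$), and at level $g$ when it carries the larger label (forcing $g\in\{2-k,\dots,-1\}$ and identifying child number $i=k+g$); the two remaining gains $0$ and $1$ are precisely those carried by the number-$k$-child edges of the left decomposition, according to whether such an edge stays on, or crosses to, the large side of the corner. I would first verify that this assignment is a bijection between $[2-k,k]$ and the admissible (child number, order) pairs, and that it is compatible with the defining inequalities of a L$k$S, namely that child $1$ is smaller than and child $k$ larger than the parent.

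With the dictionary in hand the recursion runs as in the Linial proof. Given an NBC tree with corner $c$, I delete $c$; by the decomposition Lemma the pieces are NBC subtrees whose orders are the restrictions of $O_h$, and by Proposition \ref{david} each piece is joined to $c$ through its unique $O_h$-smallest vertex adjacent to $c$ in $K_n^{2-k,k}[h]$. The gain of that joining edge then selects, via the dictionary, the child-slot in which the recursively constructed LL$k$S piece is hung beneath $c$. Conversely, starting from an LL$k$S I reattach the recursively built NBC subtrees using Proposition \ref{david} to locate the correct joining vertex, namely its corner or its subcorner according to whether the relevant piece-root is smaller or larger than $c$; this is exactly where the Linial argument split into the $sc_i<r_p$ and $sc_i\ge r_p$ cases. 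One then checks that the two maps preserve $h$ and are mutually inverse, and the passage from the tree bijection to the statement for NBC sets and full L$k$S is formal, just as LBS recovers LLBS through the left decomposition above.

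I expect the main obstacle to be precisely this case analysis. In the Linial case there was a single threshold $r_p$ and two cases, whereas here the middle children (numbered $2\le i\le k-1$, which may be either larger or smaller than the parent) fan the argument out into the $2(k-1)$ gain-classes of the dictionary, and for each of them I must confirm that the vertex demanded by Proposition \ref{david}, the $O_h$-smallest neighbour of $c$ in that component, coincides with the corner-or-subcorner dictated by the L$k$S structure, and that the resulting labels obey the L$k$S inequalities. The number-$k$-child chain, which produces the gains $0$ and $1$ and simultaneously governs both the left decomposition and the transition from LL$k$S to full L$k$S, is the delicate bookkeeping that must be handled uniformly across all these classes; once that is done, the height-preservation and the summation steps are routine.
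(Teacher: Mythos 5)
Your overall architecture does match the paper's: reduce everything to a height-preserving bijection between NBC trees of $K_n^{2-k,k}[h]$ with corner $c$ and LL$k$S with root $c$, recurse by deleting the corner and invoking Proposition \ref{david}, translate gains into child slots, and recover the full statement from the left decomposition together with the disjoint union over height functions. Your dictionary for the middle children also agrees with the paper's gain assignments ($k-i+1$ when the piece root is smaller than $c$, $i-k$ when it is larger). But there is a genuine gap exactly at the point you defer as ``delicate bookkeeping'': the child-number-$1$ pieces whose root is larger than $c$, i.e.\ those that would need the missing gain $1-k\notin[2-k,k]$. Your rule --- attach each piece to $c$ through its corner or its subcorner according to whether its root is smaller or larger than $c$ --- cannot handle the pieces whose subcorner is also large (beyond the threshold vertex $p$, the biggest small root of the $D^1$-chain): no corner-or-subcorner attachment to $c$ is available for them. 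Nor can one simply hang each of them on $p$ by a gain-$1$ edge, as the Linial proof does: for $k\ge2$ the gain $0$ exists in the graph, so two gain-$1$ edges at $p$ together with the gain-$0$ edge joining the two piece roots (which lie on the same level) would form a balanced circle whose $O_h$-smallest edge is the absent gain-$0$ edge, i.e.\ the tree would contain a broken circuit. The paper's proof resolves this with a mechanism absent from your proposal: only the \emph{largest} such piece root is joined to $p$ by a gain-$1$ edge, and the remaining ones are chained to it by gain-$0$ edges taken in decreasing order of their roots --- this is precisely where the hypothesis that $0$ is an available gain is used, and it is also what makes the inverse map possible, since from an NBC tree one recovers the hidden pieces by locating $p$ (the biggest gain-$k$ neighbour of the corner), following its unique larger gain-$1$ neighbour, and then cutting along the gain-$0$ chain.

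Two smaller inaccuracies feed into this gap. First, your corner-versus-subcorner dichotomy is also wrong for the middle children $2\le i\le k-1$: those pieces always attach through their corner, whether that corner is larger or smaller than $c$; the paper states explicitly that only gain-$k$ attachments can use the subcorner. Second, your claim that the gains $0$ and $1$ ``are carried by the number-$k$-child edges of the left decomposition'' is not correct as stated: those edges are deleted by the decomposition, and the gain-$0$/$1$ edges of the NBC tree join different pairs of vertices (the chain descends from $p$ through the problematic roots in \emph{decreasing} order, whereas the number-$k$-child chain runs through them in increasing order). Without the chaining construction and its inverse, neither direction of your bijection is defined on these pieces, so the proof as proposed does not go through.
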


\begin{proof}
The following correspondence preserves height function in both directions and therefore corner goes to root. The fact that the height function is preserved is of course very important but it also forces the position of the vertices. When we give the gain of an edge there is then no more choice. In the other direction the weight of the edge is also without choice and comes directly from the height function. There is a special vertex $p$ which plays a very central role. In one direction it permits to connect all the pieces even those which can not be connected directly to the root. In the other direction, it permits to recognize all the pieces.

To go from LL$k$S to L$k$S we just use the left decomposition which correspond to a partition of $[n]$ is the same way as a NBC is just a union of NBC trees. So we just ned to go make the correspondence between LL$k$S and NBC trees.

\medskip
{\bf(From LL$k$S to NBC trees)} Let $L$ be a LL$k$S with root $r$ and left decomposition 
in LL$k$S $D^i=\{L_0^i, L_1^i,\ldots,L^i_{\ell_i}\}$ each $L_j^i$ being of root $r_j^i$. Let $p$
be the biggest $r_j^1$ smaller than $r$. The bijection is recursive as follows:
\begin{itemize}
\item Let $N_j^i$ be the NBC trees corresponding to $L_j^i$. They are of corner $r_j^i$. 
\item For $j>1$, every $N_j^i$ can be uniquely connected with an edge to $(r,r_j^i)$. If $r_j^i<r$ add the edge $(r,r_j^i)$ of gain $k-i+1$ and if $r_j^i>r$ add the edge $(r,r_j^i)$ of gain $i-k$.
\item For $j=1$ we are in a case similar to  the Linial case.  Let $sc_j^i$ be the subcorner of $N_j^i$.
\begin{itemize}
\item Add the edge $ \{r, r_j^i\}$ for $r_j^i \le p$ with gain $k$.
\item Add the edge $ \{r, r_j^i\}$ for $r_j^i >p$  but $sc_j^i<_{O_h}p$ with gain $k$. All the other $N_j^i$ will be connected to $p$ but not necessarily directly.
\item For $r_j^i> p$ and $sc_j^i>_{O_h}p$, by
hypothesis $c_j^i$ is bigger than $r$ and also than $p$. If $sc_j^i<p$  then add the edge $ (r, sc_j^i)$ with gain $k$. For the $N_j^i$ such that $sc_j^i>p$ taken in increasing order, we add the edge  $ (p, r_j^i)$ with gain 1 for the biggest $r_j^i$. Then we add  the edge $(r_{j'}^i,r_j^i)$ with gain 0 ($r_{j'}^i$ being the corner of the preceding such $N_j^i$ taking the $r_j^i $ in decreasing order). The reason of this construction apart from keeping the height function is to be able to recognize the pieces in the other direction construction.
\end{itemize}
\end{itemize}

\medskip
{\bf(From NBC trees to LL$k$S)} 
The case $k=1$ is Linial and there is nothing to prove. In the case $k\ge2$, we will use that  the value 0 is a possible gain. 
The construction is very similar to the Linial case. The construction is just done on the neighbours with gain $k$. The other vertices can be directly attached to the corner in any case.

 Let $N$ be a NBC tree of height function $h_N$ and so for the order 
$O_{h_N}$. Let $c$ be its corner and $v_j^i<c$ its neighbours and $N_j^i$ the subtrees
obtained by deleting the vertex $c$ and the corresponding edge of gain $i$ by notation. From Proposition \ref{david}, the vertex $v_j^i$ is not necessarily the corner 
of the subtree $N_j^i$.   In fact this is true only if the gain is $k$. For
all other gain, $v_j^i$ is necessarily the corner of $N_j^i$. The reason comes from
the choice of the order $O_h$. The vertex $p$ which is the biggest
neighbour of the corner with gain $k$ plays a special role and let $N_p$ be its component. 
From the choice of the order, the vertex $p$ has at most one neighbour $v'_1$ bigger using gain 1. This vertex $v'_1$ must verify $v'_1>r$. 
The vertex $v'_1$ can  have only one  neighbour $v'_2$ smaller using gain 0. Similarly $v'_2$ can also have only one smaller neighbour using gain 0. After deleting all the edges $(v'_i,v'_{i+1})$ we get NBC subtrees $N'_i$ whose corner is $v'_i$. 
 The subtree $N'_0$ is as well the subtree
containing $p$ and is of corner $p$.  The bijection is recursive :
\begin{itemize}
\item Let $L_j^i$ be the LL$k$S corresponding to $N_j^i$  of corner $v_j^i$. We have $h(c)-h({v}_j^i)=i$ and ${v}_j^i<c$.
\item Let ${L'}_i$ be the LL$k$S corresponding to ${N'}_i$ of corner ${v'}_i$. We have $h(c)-h({v'}_j^i)=k-1+i$ and ${v'}_j^i>c$.
\item To every $L_j^i$ such that $i>1$ and $v_j^i<c$ put weight $k-i+1$.
\item To every $L_j^i$ such that $i>1$ and $v_j^i>c$ put weight $k-i$.
\item To every $L_j^i$ such that $i=1$  put weight $1$.
\item To every $L'_i$ put weight $1$.
\item The LL$k$S tree is given by its root $c$ and itsleft decomposition $D_w$ where $D_w$ is just the set of sub LL$k$S trees to which we have put weight $w$. 
\end{itemize}

\end{proof}


\section{$[1,k]$  gain graph}

We are now considering the complete gain graph $K_n^{1,k} $ for $k$ any integer bigger than 1. 
The case $k=1$ is the preceding Linial case. The bijection is based on the same idea: to connect all the blocks to the root or a special vertex. Here it becomes a little more complicated because of the different possible  gains.

\begin{theorem}
The number of NBC of the Linial gain graph on $[n]$ is equal to the number of SL$k$S on $[n]$.
Moreover, the number of NBC of corner $c$ of the Linial gain graph  on $[n]$ is equal to 
the number of SL$k$S of root $c$ on $[n]$. And even for any height function on $[n]$, the number of NBC of height $h$ of the Linial gain graph  on $[n]$ is equal to the number of SL$k$S of height $h$ on $[n]$.
\end{theorem}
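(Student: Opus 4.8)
The plan is to follow the template of the two preceding theorems and reduce all three assertions to a single, finest statement: for each height function $h$ on $[n]$, a bijection between the NBC trees of $K_n^{1,k}[h]$ and the SL$k$S carrying $h$, under which the corner of the tree is the root of the SL$k$S. (I read the displayed statement as concerning $K_n^{1,k}$, the graph of this section; ``Linial'' there is the case $k=1$.) Granting the per-height bijection, the corner-refined statement follows by summing over the possible corners and the total statement by summing over all height functions, exactly as the decomposition $\cN_O(\Phi)=\biguplus_h\cN_O(\Phi[h])$ of Section~\ref{nbc-semi} and the corner-deletion lemma of that section permit. On the tree side I would use that an SL$k$S is an L$k$S, so it inherits the left decomposition introduced for L$k$S: peeling the position-$k$ chain from the root splits the SL$k$S into left pieces that partition $[n]$, just as an NBC set is a disjoint union of NBC trees. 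Thus it suffices to build the recursive bijection between a single ``left'' SL$k$S (root with no position-$k$ child) of root $c$ and a single NBC tree of $K_n^{1,k}[h]$ of corner $c$.

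The core recursion would rest on Proposition~\ref{david}. Since every gain in $[1,k]$ is strictly positive, every neighbour of the corner $c$ in $K_n^{1,k}[h]$ is smaller than $c$ and lies at one of the levels $h(c)-1,\dots,h(c)-k$; by Proposition~\ref{david} the vertex at which a component of $\Phi\setminus c$ attaches is its $O_h$-smallest vertex joinable to $c$, i.e.\ its highest joinable vertex, with gain equal to the corresponding depth. I would match each child in a position $i$ with $1\le i\le k-1$ — which the semi-local rule forces to be smaller than the root — with a direct attachment to $c$, the position recording the gain (the natural try being position $i\mapsto$ gain $k-i+1$, so positions $1,\dots,k-1$ use the gains $k,\dots,2$), reserving gain $1$ and the special vertex $p$ (the largest gain-$k$ neighbour of $c$, as in the Linial proof) for the position-$k$ direction. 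In the forward direction I would recursively realize each child subtree, attach the ``small'' children at their corners, and thread through $p$ the subtrees that cannot attach to $c$ directly, namely those whose corner is larger than $c$; in the reverse direction $p$ is recovered as the largest gain-$k$ neighbour of the corner, and its unique larger neighbour $v'_1$ begins the chain that reconstitutes the threaded pieces and their order.

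The step I expect to be the main obstacle is precisely this threading. The corresponding construction for $K_n^{2-k,k}$ relied on gain-$0$ edges — legitimate there because $0\in[2-k,k]$ once $k\ge2$ — to string a whole chain of threaded subtree-corners along a single level; here the gain set $[1,k]=\{1,\dots,k\}$ contains neither $0$ nor any negative value, so that device is unavailable and the chain issuing from $p$ must be rebuilt out of positive gains alone. The delicate points are then twofold: first, to check that each edge added in the rebuilt threading is still the $O_h$-smallest edge of the balanced circuit it would close, so that Proposition~\ref{david} certifies the result as genuinely NBC; and second, to check invertibility, namely that from the NBC tree alone one can recover $p$, the threaded pieces, and the order in which they were attached. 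I expect the same analysis to confirm that the all-positive gain structure forces exactly the semi-local constraint — positions $1,\dots,k-1$ all smaller than the parent, only position $k$ larger — so that SL$k$S, and not the larger class L$k$S, is the correct image; this is the content of the remark that the $[1,k]$ case is ``more complicated because of the different possible gains.''
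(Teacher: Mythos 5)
Your high-level framing does match the paper's: read the theorem as being about $K_n^{1,k}$ (the ``Linial'' wording in the statement is a leftover), reduce all three claims to a height-preserving bijection between NBC trees of $K_n^{1,k}[h]$ with corner $c$ and left SL$k$S with root $c$, use the left decomposition on the tree side, Proposition~\ref{david} on the graph side, attach the ``small'' pieces directly to the corner with a position-to-gain dictionary, and route the remaining pieces through a special vertex $p$. You are also right, and in fact more accurate than the paper's own text, that the gain-$0$ chaining device of the $K_n^{2-k,k}$ proof is unavailable here: the paper still asserts ``we will use that the value $0$ is a possible gain,'' a leftover from the previous section that is false for the interval $[1,k]$.

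The genuine gap is that the threading step and its inversion --- which you yourself single out as ``the main obstacle'' and list as ``delicate points to check'' --- are precisely the new content of this theorem, and your proposal stops at announcing them rather than constructing them; everything you do carry out is a routine transfer of the two earlier proofs. What the paper supplies at exactly this point is a concrete mechanism. Forward: the pieces are processed in decreasing order of their corners for $O_h$; a piece is attached directly to the root only when its $O_h$-smallest vertex $x_j^i$ joinable to the root satisfies $x_j^i <_{O_h} p$; every other piece is grafted onto a single growing component $T$ containing $N_p$, at the $O_h$-largest vertex $t_j^i$ of $T$ to which an edge can be added, with gain equal to the height difference $h(r_j^i)-h(t_j^i)$ --- this greedy choice is what keeps $T$ an NBC tree at every stage and keeps $p$ the $O_h$-largest vertex of $T$ joinable to the root, so that Proposition~\ref{david} certifies the result. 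Backward: the paper does not follow a chain starting at a unique larger neighbour $v'_1$ (your Linial-style recipe, which is exactly the mechanism that dies without gain $0$); instead $p$ is recovered as the $O_h$-largest neighbour of the corner, and its component $N_p$ is cut back into the threaded pieces by a depth-first search from $p$ that severs every edge $(x,y)$ with $y <_{O_h} p$ and then, recursively, every edge $(x',y')$ with $y' <_{O_h} y$; each severed endpoint is the corner of a recovered piece, and the weights are read off from the height differences. Note also that your threading criterion (``corner larger than $c$'') differs from the paper's ($x_j^i >_{O_h} p$ or $x_j^i$ nonexistent); since you never implement the graft or the cut, this discrepancy cannot be resolved within your argument --- and without those two procedures the bijection, hence the theorem, is not proved.
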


\begin{proof}
The following correspondence preserves height function in both directions and therefore corner goes to root.
To go from LLBS to LBS we just use the left decomposition which correspond to a partition of $[n]$ is the same way as a NBC is just a union of NBC trees. So we just need to  make the correspondence between SLL$k$S and NBC trees.

\medskip
{\bf(From LL$k$S to NBC trees)} Let $L$ be a LL$k$S with root $r$ and left decomposition 
in LL$k$S $D^i=\{L_0^i, L_1^i,\ldots,L^i_{\ell_i}\}$ each $L_j^i$ being of root $r_j^i$. Let $p$
be the biggest $r_j^1$ smaller than $r$.  The height function $h$ of $L$ will be needed. The bijection is recursive as follows:
\begin{itemize}
\item Let $N_j^i$ be the NBC trees corresponding to $L_j^i$. They are of corner $r_j^i$.  Let $x_j^i$ be, if it exists,  the smallest vertex (for order $O_h$) such that an edge $(r,x_j^i)$ exists in $\Phi[h]$. Previously, this vertex could be only the corner or the subcorner. Now it can be  on any level between 1 and  $h(r)-h(r_j^i)$. However it is the smallest element of its level.
\item We will connect the $N_j^i$ to $r$ by taking them in the decreasing order for $O_h$ of their corner $r_j^i$. The first such $N_i^j$ is then $N_p$ of corner $p$. Some of the $N_j^i$ will be connected directly to the corner $r$. The other $N_j^i$ will be connected to $N_p$ making a special component which will be growing during the construction: we call this component $T$ which is a name of variable. 
\item Add the edge $(r,p)$ of gain $k-\ell+1$. 
\begin{itemize}
\item If $x_j^i<_h p$ then add the edge the edge $(x_j^i,p)$. For th gain there are two cases : if
 $r_j^i<r$ then the gain is $k-i+1$ and else of gain $k-i+h(r_j^i)-h(x_j^i)$  (the $+1$ disappeared to conserve the height function).
This is the case where $N_j^i$ can be connected directly to the corner $r$. The other case will go to $T$.
\item If $x_j^i>_h p$ or also $x_j^i$ does not exist now we connect to $T$. We just need to find in $T$ the vertex $t_j^i$ to add edge $(t_j^i,r_j^i)$. The vertex $t_j^i$ is simply the biggest vertex
of $T$  for $O_h$ such that the edge $(t_j^i,r_j^i)$ exists. So we add the edge $(r_j^i,t_j^i)$ with gain $h(r_j^i)-h(t_j^i)$.

\end{itemize}
\end{itemize}

To be complete we must check that the sign graph is indeed a NBC tree of height function $h$ and such that $p$ is the biggest neighbour of $r$. First, the tree $T$ is at every moment a NBC tree by the choice of $t_j^i$. Also the condition $x_j^i>_h p$ makes that the vertex $p$ is always the biggest vertex of $T$ for $O_h$ which can be connected to $r$. So by Proposition \ref{david} we obtain an NBC tree. The vertex $p$ is then the biggest neighbour of $r$ since every other neighbour is a $x_j^i$ verifying  $x_j^i<_h p$. Finally the height function is well preserved by the choices of the gains.

\medskip
{\bf(From NBC trees to LL$k$S)} 
The case $k=1$ is Linial and there is nothing to prove. In the case $k\ge2$, we will use that  the value 0 is a possible gain. 

 Let $N$ be a NBC tree of height function $h_N$ and so for the order 
$O_{h_N}$. Let $c$ be its corner and $v_j^i<c$ its neighbours, $N_j^i$ the subtrees
obtained by deleting the vertex $c$, $c_j^i$ the corner of $N_j^i$. By notation we have that 
 $h(c)-h(c_j^i)=i$ by notation. If we want the gain of the edge$(c,v_j^i)$ is is $h(c)-h(v_j^i)$.
 From Proposition \ref{david}, the vertex $v_j^i$ is not necessarily the corner 
of the subtree $N_j^i$ but the smallest vertex for $O_h$ which can be connected in $N_j^i$.   The vertex $p$ which is the biggest 
neighbour of the corner for $O_h$  plays a special role and let $N_p$ be its component. 

We need now to cut $N_p$ to get back the blocks in the preceding construction ($N_p$ corresponds to the final $T$). 
The algorithm to get back the pieces is simply a kind of depth first search starting at $p$.
Each time we found an edge $(x,y)$ such that $y<_h p$ we know that the vertex $y$ is not in $T_p$. In fact we know also that $y$ is the corner of a new NBC which we will obtain by continuing the search and cutting now edges $(x',y')$ where $y'<_h y$. 
Finally we get subtrees $N'_i$ of corner $y_i$ when we put indices at each vertex $y$ found.
 The subtree $N'_0$ is as well the subtree
containing $p$ and is of corner $p$.  The bijection is recursive :
\begin{itemize}
\item Let $L_j^i$ be the LL$k$S corresponding to $N_j^i$  of corner $c_j^i$. We have $h(c)-h({c}_j^i)=i$.
\item Let ${L'}_i$ be the LL$k$S corresponding to ${N'}_i$ of corner ${y}_i$.
\item To every $L_j^i$ such that $i>1$ and $c_j^i<c$ put weight $k-i+1$.
\item To every $L_j^i$ such that $i>1$ and $c_j^i>c$ put weight $k-i$.
\item To $L'_0$ put weight $k-(h(c)-h(p))+1$. Recall that $p$ is the corner of $L'_0$ and that $p<_h c$.
\item To every $L'_i$ put weight $k-(h(c)-h(y_i))$. Now we have that $y_i$ is the corner of $N'_i$ and that $y_i>_h c$.
\item The LL$k$S tree is given by its root $c$ and its left decomposition $D_w$ where $D_w$ is just the set of sub LL$k$S trees to which we have put weight $w$. 
\end{itemize}

\end{proof}

\section{L$k$S and SL$k$S as coloured trees and forests}

We kept the definition of LBS and gave their two similar generalisations because our first goal was to go from NBC to LBS. However we already made the remark that the right decomposition means that LBS are like forests of LLBS. Similarly the L$k$S and SL$k$S are also forests of left parts. We will give in this section a definition which would have been more suitable for our constructions and that is in some sense more natural.

We consider $T{n,k}$ the set of rooted coloured labelled trees that is rooted labelled trees on $[n]$ where the edges have $k$ possible colours in $[k]$. On $[n]$ the number of rooted labelled trees is known to be $n^{n-1}$ and so the cardinality of  $T{n,k}$ is $2^{n-1}n^{n-1}$.

\begin{definition}
An inner vertex $x$ is said to be a \emph{descent } if it has no child of colour 1 or if it has a child $y$ of colour 1 such that $y<x$.

An inner vertex $x$ is said to be a \emph{S-descent } if its smallest child of smallest caller  $y$ is such that $y<x$.
\end{definition}

\begin{theorem}
The LL$k$S are in bijection with the rooted coloured labelled trees such that all inner vertices are descents. The SLL$k$S are in bijection with the rooted coloured labelled trees such that all inner vertices are S-descents.
\end{theorem}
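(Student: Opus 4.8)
The plan is to deduce both bijections from the two height-preserving correspondences already established, namely LL$k$S $\leftrightarrow$ NBC trees of $K_n^{2-k,k}$ and SLL$k$S $\leftrightarrow$ NBC trees of $K_n^{1,k}$, by reinterpreting an NBC tree as an element of $T_{n,k}$. The first task is to define the colouring. An NBC tree is a rooted labelled tree on $[n]$ (rooted at its corner) in which every edge carries a gain; I would colour the edge to a child by the \emph{slot} $i\in[k]$ it occupies in the matching LL$k$S rather than by its gain. In the constructions of the previous two sections the gain attached to slot $i$ is determined by $i$ together with the comparison of the two endpoints (a slot-$1$ child always uses gain $1$, while for $i\ge 2$ the two cases $x<y$ and $x>y$ use the two consecutive gains read off the height function, and the flattened slot-$k$ chains use gains $1$ and $0$). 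Thus the pair (colour, endpoint comparison) recovers the gain and conversely, so recording only the colour loses no information and yields a genuine coloured tree; the corner becomes the root, and the height function can be rebuilt from the colours level by level.

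With this dictionary the theorem amounts to saying that the no-broken-circuit condition on the gain tree becomes the descent (resp. S-descent) condition on the colours. Here I would invoke Proposition \ref{david}: an NBC tree is exactly one in which every neighbour $v_i$ of the corner is the $O_h$-smallest vertex of its component that is adjacent to the corner in the selected graph. For $K_n^{2-k,k}$ the only slot whose defining inequality forces a child to lie below its parent is slot $1$ (it plays the Linial role of gain $1$), so the smallest-neighbour condition reads: an inner vertex either has no colour-$1$ child or its colour-$1$ child is smaller, which is precisely the descent condition. For $K_n^{1,k}$ all the gains $1,\dots,k-1$ keep the child below the parent, so the constraint becomes that the smallest child of smallest colour lie below the parent, i.e.\ the S-descent condition. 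Finally, the passage from LL$k$S to L$k$S (resp.\ SLL$k$S to SL$k$S) is the left decomposition, which on the coloured side is just the statement that a coloured forest is the disjoint union of its rooted coloured trees, mirroring the decomposition of an NBC set into NBC trees.

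The step I expect to be the main difficulty is checking that the colouring map is onto the descent (resp.\ S-descent) trees and injective, that is, that the inverse is well defined. Running the correspondence backwards, from a descent coloured tree one must reconstruct the height function and then undo the flattening of the slot-$k$ chains: the pieces attached directly to the corner must be separated from those routed through the special vertex $p$ and reassembled into chains, which on the NBC side is exactly the depth-first cutting procedure of the NBC-to-LL$k$S constructions. Proving that the descent condition is precisely what renders this reconstruction unique — equivalently, that colour $1$ alone detects the Linial obstruction of Proposition \ref{david} and that no freedom remains once the colours and the labels are fixed — is the crux of the argument; the SL$k$S case needs in addition the availability of gain $0$, which is what lets the S-descent condition be realised without ambiguity. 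I would close by verifying that the induced height functions agree on the two sides, so that the bijection is height-preserving and in particular sends the corner to the root, consistent with the refined counts by height stated in the earlier theorems.
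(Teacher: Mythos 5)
Your proposal contains a genuine structural error, and it is exactly at the point where you set up the dictionary. You propose to keep the underlying rooted tree of the NBC tree (rooted at its corner) and merely recolour each parent--child edge ``by the slot it occupies in the matching LL$k$S.'' But the NBC tree and the coloured tree of this theorem do not have the same underlying tree, so this recolouring is ill-defined and cannot succeed under any interpretation. The paper's own Figure 1 shows this in the Linial case: the NBC tree has corner $4$ with children $1,3,2$, then $3\to 7$, $2\to 5$, $7\to 6$; the edges $(3,7)$ and $(2,5)$ join vertices that are \emph{not} in a parent--child relation in the LLBS (there $7$ and $5$ are children of $5$ and $3$ respectively in the right chain), so ``the slot occupied in the matching LL$k$S'' simply does not exist for them. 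Meanwhile the coloured tree that actually corresponds to that LLBS is a different tree: root $4$ with the four children $1,3,5,7$, plus $5\to 2$ and $7\to 6$. Your auxiliary claim that ``(colour, endpoint comparison) recovers the gain and conversely'' also fails: the slot-$1$ pieces of an LL$k$S are attached in the NBC tree using three distinct gains ($k$ to the corner, $1$ to the special vertex $p$, and $0$ along the flattened chains), which cannot be encoded by one colour together with a binary comparison. Finally, you yourself flag the reconstruction (separating the pieces attached to the corner from those routed through $p$) as ``the crux'' and leave it unproven, so even granting the set-up the argument is incomplete at its decisive step.

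The paper's proof never mentions NBC trees, gain graphs, height functions, or Proposition~\ref{david}; it is a short, purely tree-theoretic recursion. Given an LL$k$S with root $r$, for each slot $i\le k-1$ one takes the left decomposition of the subtree rooted at the number-$i$ child, i.e.\ the chain of number-$k$ children, obtaining LL$k$S pieces with roots $r_0^i<r_1^i<\cdots<r_{\ell_i}^i$, and attaches all of these roots directly to $r$ as children of colour $i$, recursing inside each piece. Conversely, in a coloured tree the children of colour $i$ of a vertex are re-chained in increasing order by number-$k$ edges, the smallest becoming the number-$i$ child. The number-$k$-child inequality holds automatically along an increasing chain, and the number-$1$-child inequality is precisely the descent (resp.\ S-descent) condition, which is what makes the two maps well defined and mutually inverse. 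If you want to salvage your route through the earlier theorems, you would in effect have to build this same flattening map as the bridge between the NBC tree and the coloured tree; it is not obtained by recolouring, and proving it directly, as the paper does, is both shorter and independent of the (considerably harder) NBC constructions.
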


\begin{proof}
From SLL$k$S to labelled trees.
Let $L$ be a LL$k$S, $r$ be its root, $r_i$ be $r$'s number $i$ child 
for $1\le i\le k-1$ (it has no number $k$ child). 
The vertex $r_i$ is the root of an L$k$S $L^i$. Let take the left decomposition of $L^i$
into the set of LL$k$S  $L_0^i$ , \ldots ,$L_{\ell_i}^i$ with corresponding roots
$r_0^i$,\ldots,$r_{\ell_i}^i$. We obtain recursively
the bijection  just by replacing each LL$k$S $L_i$ by its corresponding rooted tree
and adding the edges $\{r,r_j^i\}$ for $0\le j\le\ell_i$ with weight $i$.

From labelled trees to LL$k$S. Let $T$ be a rooted labelled tree with the property and $r$
be its root. Let $T_j^i$ be the rooted labelled trees obtain by deleting the root $r$ where
$i$ is the value of the weight of the deleted edge.
The tree $T$ is not a rooted plane tree so there is special order on these subtrees. 
Any way they have root $r_j^i$ which can be uniquely ordered by their label and
let us suppose then that $r_0^i< r_1^i\ldots <r_{\ell_i}^i$. That means we work on each 
weight separately. By hypothesis we have  $r_0<r$.
So recursively again, we obtain the LL$k$S corresponding to $T$ by replacing
each $T_i$ by its corresponding LL$k$S and joining them to the root by adding the edge
$\{r,r_0^i\}$ ($r_0^i$ is the number $i$ child of $r$) and all the edges $\{r_{j-1}^i,r_j^i\}$ ($r_j^i$ is the number $i$ child of $r_{j-1}^i$) for
$1\le j\le \ell_i$.
\end{proof}

\begin{remark}
\begin{enumerate}
\item Note that LBS are both L$k$S and SL$k$S for $k=1$. Of course this comes from the fact that the two different definitions are the same when $k=1$.
\item 
Note also that we could make a new definition with two different set of weights $W_1=[k_1]$ and $W_2=[k_2]$. A $W_1$-descent would be a inner vertex $x$ such that $y$ its smallest child of smallest weight in $W_1$  is such that $y<x$. Then the descent definition corresponds to the case $k_1=1$ and $k_2=n-1$ and the S-descent definition corresponds to the case $k_1=n$ and $k_2=0$. Other values of $k_1\ge 1$ and $k_2\ge 0$ are possible and will correspond in our construction to the complete gain graphs $K_n^{[1-k_2,k_1+k_2]}$.

\end{enumerate}

\end{remark}

\end{document}